\documentclass[a4paper, 11pt]{article}

\usepackage[english]{babel}
\usepackage[utf8x]{inputenc}
\usepackage{ulem}
\usepackage[T1]{fontenc}
\usepackage{subfiles}

\usepackage{amsmath, amsthm, amsfonts, amssymb, bbm, algorithm, algorithmic,mathrsfs,dsfont, stmaryrd}
\usepackage{cancel}
\usepackage{xr-hyper} 
\usepackage[table,dvipsnames]{xcolor}
\usepackage{graphicx, caption, subcaption, natbib, enumitem, hyperref, minitoc, pgf,tikz,makecell, rotating}
\usepackage[export]{adjustbox}

\usetikzlibrary{shapes.geometric,arrows,mindmap,fit,patterns,decorations.pathreplacing}

\usepackage[top=3cm, bottom=3cm, left=3cm, right=2.5cm]{geometry}



\setlength{\marginparwidth}{20mm}

\usepackage[linecolor=blue!60!,backgroundcolor=blue!10!,textwidth=2.1cm,textsize=tiny]{todonotes}


\newtheorem{theorem}{Theorem}

\newtheorem{lemma}[theorem]{Lemma}

\newtheorem{assumption}{Assumption}

\theoremstyle{remark}
\newtheorem{remark}{Remark}

\DeclareMathOperator{\argmin}{argmin}
\DeclareMathOperator{\ind}{\mathbbm{1}}

\DeclareMathOperator{\E}{\mathbb{E}}
\DeclareMathOperator{\R}{\mathbb{R}}

\DeclareMathOperator{\FDR}{FDR}
\DeclareMathOperator{\FDP}{FDP}
\DeclareMathOperator{\TDR}{TDR}
\DeclareMathOperator{\TDP}{TDP}

\newcommand{\wt}[1]{{\widetilde{#1}}}
\newcommand{\wh}[1]{{\widehat{#1}}}

\DeclareMathOperator{\DtrO}{\cD^0_{\text{train}}}
\DeclareMathOperator{\Dcal}{\cD_{\text{cal}}}
\newcommand{\Dcall}{\cD_{\text{cal}}}
\DeclareMathOperator{\Dtest}{\cD_{\text{test}}}
\newcommand{\Dtestt}{\cD_{\text{test}}}

\newcommand{\Ztr}{Z_{\text{train}}}
\DeclareMathOperator{\Otr}{\Omega_{\text{train}}}

\DeclareMathOperator{\cDNull}{\cD^0}
\DeclareMathOperator{\cDAlt}{\cD^1}

\newcommand{\EM}{\ensuremath}

\newcommand{\cB}{\EM{\mathcal{B}}}

\newcommand{\cD}{\EM{\mathcal{D}}}
\newcommand{\cE}{\EM{\mathcal{E}}}

\newcommand{\cG}{\EM{\mathcal{G}}}
\newcommand{\cH}{\EM{\mathcal{H}}}
\newcommand{\cI}{\EM{\mathcal{I}}}

\newcommand{\cL}{\EM{\mathcal{L}}}

\newcommand{\cZ}{\EM{\mathcal{Z}}}

\newcommand{\cpink}[1]{\textcolor{purple}{#1}}

\definecolor{mygreen}{rgb}{0.82, 1.0, 0.82}
\definecolor{myred}{rgb}{ 1.0, 0.84, 0.84}

\graphicspath{ {./img/} }

\tikzset{
    side by side/.style 2 args={
        line width=2pt,
        #1,
        postaction={
            clip,postaction={draw,#2}
        }
    }
    }

\renewcommand*{\thefootnote}{\fnsymbol{footnote}}

\begin{document}
\begin{center}
{\LARGE
	{  {FDR control and FDP bounds for conformal link prediction}}
}
\bigskip

Gilles Blanchard$^{1}$, Guillermo Durand$^{1}$, Ariane Marandon$^{2}$, Romain P\'erier$^{1}$
\bigskip

{\small
{
$^{1}$ 
LMO, Universit\'e Paris-Saclay, gilles.blanchard@universite-paris-saclay.fr, guillermo.durand@universite-paris-saclay.fr, romain.perier@universite-paris-saclay.fr

$^{2}$ 
Turing Institute, amarandon-carlhian@turing.ac.uk

}
}
\bigskip


\end{center}
\bigskip

\begin{abstract}
{
In \cite{marandon2023conformal}, the author introduces a procedure to detect true edges from a partially observed graph using a conformal prediction fashion: first computing scores from a trained function, deriving conformal $p$-values from them and finally applying a multiple testing procedure. In this paper, we prove that the resulting procedure indeed controls the FDR, and we also derive uniform FDP bounds, thanks to an exchangeability argument and the previous work of \cite{marandon22a}.
}

\end{abstract}

\renewcommand*{\thefootnote}{\arabic{footnote}}
\setcounter{footnote}{0}

\section{Introduction}

{
Consider a link prediction setting where we partially observe a graph, that is, the presence or absence of an edge (borrowing to multiple testing literature, we will use the terms ``true'' and ``false'' edge, respectively) between two nodes is either observed, either masked \citep{lu2011link}. Furthermore, consider that the missing data mask is known. Hence, the absence of connection between two nodes is either an observed false edge, either a masked edge (and the state of the edge, true or false, is unobserved), and we know if we are in the first or the second case. The task at hand is to provide a label (true or false) to each unobserved edge. In \citet{marandon2023conformal}, the author introduced the first, to our knowledge, method which aims to control the False Discovery Rate (FDR) in the given context, using tools from the Conformal Inference (CI) literature, namely conformal $p$-values \citep{MR2161220}. However, because of the graph structure inducing intricate dependence in the data, the setup is different from the usual case encountered in CI: typically, in CI, the data is assumed to be exchangeable. Hence, theoretical FDR control was not proven, but was observed empirically on simulated data.
}

{
In this work, we prove that the procedure of \citet{marandon2023conformal} controls the FDR under a general missing data setup and without any distributional assumptions on the complete network. The core observation to make is that even if the data is not exchangeable, the scores are, under really mild assumptions (no ties among them, almost surely). This allows to use the results of \citet{marandon22a} to achieve FDR control. Furthermore, the exchangeability of the scores also allows to use \citet{GBR2023} to get a uniform False Discovery Proportion (FDP) bound for the path $(R(t))_{t\in[0,1]}$, where $R(t)$ is the set of unobserved edges with conformal $p$-value lesser or equal than $t$. This uniform bound in turn allows to construct confidence post hoc bounds \`a la \citet{MR4124323}.
}

{
For an extensive review of related works in link prediction, conformal inference, and multiple testing, we refer the reader to \citet[Sections 1.3, 3.1, 3.3]{marandon2023conformal}.
}

{
  The rest of the paper is organized as follows. In Section~\ref{rappel}, we first define formally the model, the problem and the notation, before recalling the conformal link prediction procedure introduced in \citet{marandon2023conformal}. Then, in Section~\ref{theory}, we state and prove our two main theorems. We conclude in Section~\ref{discussion}
  with a short discussion of the relation to recent literature, in particular the work of \citet{huang2023uncertainty}.
}




\section{Conformal link prediction}\label{rappel}

\subsection{Model and notation}

Let $A^* = (A^*_{i,j})_{1 \leq i,j \leq n}$ be the adjacency matrix of the true complete graph $\cG$, $X \in \R^{n \times d}$ a matrix of node covariates (if available), and $\Omega = (\Omega_{i,j})_{1 \leq i,j \leq n}$ the sampling matrix such that $\Omega_{i,j} = 1$ if the interaction status (true/false) of $(i,j)$ is observed, and $0$ otherwise. 
We denote by $A$ the observed adjacency matrix with $A_{i,j} = \Omega_{i,j} A^*_{i,j}$. Thus, $A_{i,j}=1$ indicates that there is an observed true edge between $i$ and $j$, whereas $A_{i,j}=0$ indicates either the observed lack of an edge or an unreported edge. The sampling matrix $\Omega$ is assumed to be observed, so that it is known which zero-entries $A_{i,j}=0$ correspond to observed false edges and which ones correspond to missing information. 
We denote by $P$ the joint distribution of $Z^* = (A^*, X, \Omega)$, 
$Z$ the observation $(A, X, \Omega)$ and $\cZ$ the observation space. {$P$ belongs to a model $\mathcal P$.} We consider a general missing data setup in which the entries of $\Omega$ are independently generated as:
\begin{align*}
\Omega_{i,j} \vert A^*, X \sim \cB( w_0 \ind_{A^*_{i,j} =0} + w_1 \ind_{A^*_{i,j} =1} ), \quad 1 \leq i, j \leq n 
\end{align*}
for some unknown sampling rates $w_0, w_1$. This type of missing data setup is called double standard sampling \citep{chiquet20, sportisse2020imputation} and is a generalization of the case where the entries of $\Omega$ are i.i.d, that is more relevant for practical applications (see Section 2 in \citet{marandon2023conformal} and the references therein for a discussion of this assumption with regards to the literature). 
Let us introduce the following notations:
\begin{itemize}
\item We denote by $\Dtest(Z)=\{ (i,j) \; \colon \;  \Omega_{i,j} = 0\}$ the set of non-sampled (or missing) node pairs and by $\cD(Z)=\{ (i,j) \; \colon \;  \Omega_{i,j} = 1\}$ the set of sampled pairs, with $\cDNull = \{ (i,j) \in \cD \; \colon \; A^*_{i,j} = 0 \}$ the set of observed false edges and $\cDAlt = \{ (i,j) \in \cD \; \colon \; A^*_{i,j} = 1 \}$ the set of observed true edges. We refer to $\Dtest(Z)$ as the test set.
\item We denote by $\cH_0 = \{(i,j) \; \colon \; \Omega_{i,j} = 0, A^*_{i,j} = 0 \} $ the (unobserved) set of false edges in the test set and $\cH_1 = \{(i,j) \; \colon \; \Omega_{i,j} = 0, A^*_{i,j} = 1 \}$ the (unobserved) set of true edges in the test set. 
\end{itemize}
In link prediction, one is interested in classifying the unobserved node pairs {of} $\Dtest$ into true edges and false edges, or in other words, selecting a set of unobserved node pairs to be declared as true edges, based on the observed graph structure. 
Define a selection procedure as a (measurable) function $R=R(Z)$ that returns a subset of $\Dtest$ corresponding to the indices $(i,j)$ where an edge is declared. 
For any such procedure $R$, the False Discovery Rate (FDR) of $R$ is defined as the average of the False Discovery Proportion (FDP) of $R$ under the model parameter $P\in \mathcal{P}$, 
that is, 
\begin{align*}
\FDR(R)&=\E_{Z^* \sim P} [\FDP(R)],\:\:\: \FDP(R)=\frac{\sum_{i\in \cH_0} \ind_{i\in R}}{1\vee |R|}.
\end{align*}
{Similarly, the true discovery rate (TDR) is defined as the average of the true discovery proportion (TDP), that is,}
\begin{align*}
\TDR(R)&= \E_{Z^* \sim P} [\TDP(R)],\:\:\: 
\TDP(R)=\frac{\sum_{i\in \cH_1} \ind_{i\in R}}{1\vee \vert \cH_1 \vert }.
\end{align*}
The aim considered here is to build a procedure $R$ that controls the FDR while having a TDR (measuring the \textit{power} of the procedure) as large as possible. 

{
Notably, a well-known procedure controlling the FDR is the Benjamini-Hochberg procedure \citep{BH1995} which, given $m$ $p$-values $p_1,\dotsc,p_m$, sorts them: $0=p_{(0)}\leq p_{(1)}\leq\dotsb p_{(m)}$, defines $\hat k=\max\left\{k\in\llbracket 0,m\rrbracket : p_{(k)}\leq \alpha\frac km\right\}$, and rejects the $p$-values smaller than $\alpha \frac{\hat k}m$.
}

{
Let us emphasize that, in our setting, $\cH_0$ is random, which is not entirely classical in the multiple testing litterature, but it is also not unusual, and can be traced back to, at least, the mixture model presented in \citet{MR1946571}.
}

\subsection{Conformal link prediction procedure}

{
We briefly recall the procedure which is described in details in \citet[Section 3.2]{marandon2023conformal}. Assume that we have at hand a score estimator function $g: \cZ \rightarrow \R^{n \times n}$ that takes as input an observation $z \in  \cZ$ and returns a score matrix $(S_{i,j})_{1 \leq i,j \leq n} \in \R^{n \times n}$ that scores the likeliness of a true edge between couples of nodes. The function $g$ corresponds to a link prediction algorithm that would be trained on the ``training'' dataset $z$. Any off-the-shelf link prediction algorithm can be used, up to a specification detailed in the next paragraph. Our training set has to be different from a ``calibration'' set on which we will compute scores and compare them to the scores computed on the test set to build the conformal $p$-values. We build this calibration set $ \Dcal$ of false edges by sampling uniformly $\ell=  |\Dcal|$ edges without replacement from the set of observed false edges $\cDNull$. The algorithm is then trained on $\Ztr=(A,X,\Otr)$ where $(\Otr)_{i,j} = 0$ if $(i, j) \in \Dcal$ and $\Omega_{i,j}$ otherwise. That is, formally, we simply compute $g(\Ztr)$. The scores $S_{i,j}=g(\Ztr)_{i,j}$ are then computed for all $(i,j)\in \Dcal\cup \Dtest$. Conformal $p$-values $(p_{i,j})_{(i,j) \in \Dtest}$ are then computed following
\begin{align} \label{lp:eq:confpvalues}
    p_{i,j} = \frac{1}{\ell +1} \left(1 + \sum_{(u,v) \in \Dcal} \ind_{ \{S_{i,j} \leq S_{u,v} \} }\right), 
    \quad (i,j) \in \Dtest 
\end{align}
and finally the BH procedure is applied to them. Those steps are recapped in Algorithm~\ref{lp:algo:main}.
}

{
Although $g$ can be any measurable function, we now discuss a general shape of $g$ mild and commonly used. Let us introduce, for a given $K \in \{1, \dots, n \}$, and any adjacency matrix $a$,
\begin{align*}
w(a)_{i,j} = (a_{i, \bullet}, a_{j, \bullet}, a^2_{i, \bullet}, a^2_{j, \bullet}, \dots, a^K_{i, \bullet}, a^K_{j, \bullet}), 
\end{align*}
where $a^k_{i, \bullet} = (a^k_{i, u})_{1\leq u \leq n}$ for $1 \leq k \leq K$, that is, $a^k_{i, \bullet} $ is the $i$-th row of $a$ to the power $k$. For $z=(a,x,\omega)\in\mathcal Z$, the commonly used form for $g(z)$ is then
\begin{align} \label{lp:eq:scorefn2}
g(z)_{i,j} &= h\big( (w(a)_{i,j}, x_{N(i,K),\bullet},x_{N(j,K),\bullet}) ; \{(w(a)_{u,v}, x_{N(u,K),\bullet},x_{N(v,K),\bullet}, a_{u,v}), \omega_{u,v}=1 \}\big) 
\end{align}
with $h(\cdot\,;\cdot)$ some real-valued measurable function, $N(i,K)$ the $K$-hop neighborhood of node $i$, and $x_{S, \bullet}$ is the submatrix $(x_{u,l})_{\substack{u\in S\\1\leq l\leq d}}$ for any subset $S\in \llbracket 1, n\rrbracket$.} Note that the second argument of $h$ means exactly that $g$ is trained only on the information provided by the non-missing edges. Also note that because $(\Otr)_{i,j}=0$ for all $(i,j) \in \Dcal$, $\Otr$ treats the edges of $\Dcal$ as missing. Hence $g(\Ztr)$ indeed is trained on a set of observed edges that is disjoint from the edges of the calibration set, thus avoiding overfitting. 

{
This formulation indeed is mild and encompasses numerous link prediction algorithms. For example the simple common neighbors heuristic \citep{lu2011link} given by $g(z)_{i,j} = a_{i, \bullet}^\top a_{j, \bullet}$.
The larger class of Empirical Risk Minimization (ERM) methods (like in \citet{bleakley07} or \citet{zhang2018link}) also generally fall into this form. Take a class $\mathcal F$ of measurable functions taking values in $[0,1]$, that can be thought of as binary classifiers. It could be a class of (graph) neural networks for example. Also let $\cL$ a measurable loss function, for example the cross-entropy loss function. Then let $\hat f \in \argmin_{f\in\mathcal F}\sum_{(u,v): \omega_{u,v}=1} \cL \left(f(w(a)_{u,v}),a_{u,v}  \right) $ and finally $h\big(\cdot\,; \{(w(a)_{u,v}, a_{u,v}), \omega_{u,v}=1 \}\big) =\hat f(\cdot)$. This ERM framework indeed fits Equation~\eqref{lp:eq:scorefn2}.
More examples are provided in \citet[Section 3.3]{marandon2023conformal}.
}



\begin{algorithm}[t]
Input: {Observation $Z=(A,X,\Omega)$: (Observed) adjacency matrix $A$, node covariate matrix $X$, sampling matrix $\Omega$; LP algorithm $g$; 
sample size $\ell$ of the reference set}\\
Define $\cD=\{(i,j) : \Omega_{ij} = 1 \}$; $\Dtest=\{(i,j) : \Omega_{ij} = 0 \}$; $\cDNull = \{ (i,j) \in \cD: A_{ij} =0\}$\\
1. Sample $\Dcal$ of size $\ell$ uniformly without replacement from $\cDNull$ \\
2. Learn $g$ on ``masked'' observation $\Ztr=(A,X,\Otr)$, with $\Otr$ given by 
 \begin{align*}
(\Otr)_{i,j}= \begin{cases}
      0 & \text{if}\ (i,j) \in \Dcal,  \\
      \Omega_{i,j} & \text{otherwise.}
    \end{cases}
\end{align*}
3. For each $(i,j) \in \Dcal \cup \Dtest$, compute the score $S_{i,j} = g(\Ztr)_{i,j}$ \\
4. For each $(i,j) \in \Dtest$, compute the conformal $p$-value $p_{i,j}$ given by~\eqref{lp:eq:confpvalues}\\
5. Apply BH using $(p_{i,j})_{(i,j) \in \Dtest}$ as input $p$-values, 
providing a rejection set $R(Z) \subset \Dtest$. \\
Output: { $R(Z)$} 
  \caption{Conformal link prediction}\label{lp:algo:main}
\end{algorithm}


\section{FDR {and FDP} control guarantees}\label{theory}

In this section, we establish that the procedure controls the FDR in finite samples {and the FDP with high probability.}
In the sequel, we
use the following notation: $k_0 = \vert \cD^0 \vert$, $m= \vert \Dtest \vert$, {$\DtrO = \cD^0 \setminus \Dcal$}.

We will consider the following assumptions:

\begin{assumption}  \label{lp:ass:noties} The {matrix} score ${g(\Ztr) =\,\,} (S_{i,j})_{1 \leq i,j \leq n}$ has no ties a.s. 
\end{assumption}

\begin{assumption}  \label{lp:calsize} The calibration set size $\ell$ is a function of $k_0=|\cD^0|$ and $\cD^1$ only.
\end{assumption}

The main result of this section is the following:

\begin{theorem} \label{lp:thm1} Consider the output $R$ of the procedure proposed in Algorithm~\ref{lp:algo:main} with a scoring function $g$ of the form \eqref{lp:eq:scorefn2}, and let Assumptions~\ref{lp:ass:noties} and~\ref{lp:calsize} hold true. Then, it holds that $\FDR(R) \leq \alpha.$
\end{theorem}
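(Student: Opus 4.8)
The plan is to reduce the statement to the finite-sample FDR control for conformal $p$-values proved in \citet{marandon22a}, by exhibiting a conditioning under which the relevant scores become exchangeable.

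First, fix the $\sigma$-field $\mathcal{G}$ generated by $X$, by the set of true edges $\{(i,j):A^*_{i,j}=1\}$ together with $(\Omega_{i,j})$ restricted to those edges (so that $\cD^1$, $\cH_1$ and the set $F=\{(i,j):A^*_{i,j}=0\}$ of false-edge positions are $\mathcal{G}$-measurable), by the count $k_0=|\cD^0|$, and by the ``false training set'' $\DtrO=\cD^0\setminus\Dcal$; by Assumption~\ref{lp:calsize}, $\ell$ is then $\mathcal{G}$-measurable. Since the entries of $\Omega$ are conditionally independent given $(A^*,X)$, its restriction to $F$ is independent of its restriction to the true edges; combining this with the fact that $\Dcal$ is a uniform $\ell$-subset of $\cD^0$, one obtains that, conditionally on $\mathcal{G}$, the calibration set $\Dcal$ is uniformly distributed over the $\ell$-subsets of the ``null pool'' $\mathcal{N}:=F\setminus\DtrO$, with $\cH_0=\mathcal{N}\setminus\Dcal$ and $\Dtest=\cH_0\cup\cH_1$ of $\mathcal{G}$-measurable cardinality $m$.

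Second, observe that conditionally on $\mathcal{G}$ the whole score vector on $\Dcal\cup\Dtest$ is deterministic. Indeed, all calibration and test edges $(i,j)$ satisfy $A_{i,j}=0$, so $A$ coincides with the indicator matrix of $\cD^1$ and hence $w(A)$ and all the $K$-hop neighborhoods depend only on $\cD^1$; moreover the edges of $\Dcal$ are masked in $\Otr$ while test edges satisfy $\Omega_{i,j}=0$, so the ``training'' argument of $h$ in \eqref{lp:eq:scorefn2} ranges exactly over $\DtrO\cup\cD^1$. Therefore, for every $(i,j)$ with $A_{i,j}=0$ — in particular for every $(i,j)\in\Dcal\cup\Dtest$ — the score $S_{i,j}=g(\Ztr)_{i,j}$ is a fixed measurable function of $\cD^1$, $X$, $\DtrO$ and $(i,j)$; crucially it does not depend on how $\mathcal{N}$ is split into $\Dcal$ and $\cH_0$. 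Together with Assumption~\ref{lp:ass:noties}, we are thus in the following situation given $\mathcal{G}$: a fixed family of pairwise distinct reals indexed by $\mathcal{N}\cup\cH_1$, the calibration scores being those attached to a uniformly random $\ell$-subset $\Dcal\subset\mathcal{N}$, the test-null scores being the remaining $\mathcal{N}$-values, and the test-alternative scores $(S_{i,j})_{(i,j)\in\cH_1}$ being arbitrary but fixed.

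This is exactly the exchangeable configuration for which \citet{marandon22a} show that BH at level $\alpha$ applied to the conformal $p$-values \eqref{lp:eq:confpvalues} controls the FDR; hence $\E[\FDP(R)\mid\mathcal{G}]\leq\alpha$ (in fact $\leq\frac{|\cH_0|}{m}\,\alpha$), and taking expectations over $\mathcal{G}$ gives $\FDR(R)\leq\alpha$. The step I expect to be the real obstacle is the second one: one must argue that moving a false edge from the calibration set to the unobserved test set changes neither the observed matrix $A$ (it is $0$ there in both cases) nor the sample on which $g$ is trained (it is excluded in both cases), so that the scoring function — and with it the entire score vector on $\Dcal\cup\Dtest$ — is frozen once $\DtrO$ is conditioned on. This is where the specific form \eqref{lp:eq:scorefn2} of $g$ and the masking built into Algorithm~\ref{lp:algo:main} are essential, and Assumption~\ref{lp:calsize} is what legitimizes the conditioning, since it guarantees that $\ell$ carries no information about the random split of $\mathcal{N}$.
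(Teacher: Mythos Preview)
Your proposal is correct and follows essentially the same approach as the paper: you condition on the same information as the paper's variable $W=(k_0,A^*,\cD^1,\DtrO,X)$ (your $\mathcal G$ generates precisely this), establish that the full score matrix is $\mathcal G$-measurable while $\Dcal$ is conditionally a uniform $\ell$-subset of the null pool, and then reduce to the exchangeable-score setting of \citet{marandon22a}. The only presentational difference is that the paper makes the exchangeability explicit via an auxiliary random ordering (its Lemma~\ref{lp:lem:exchbis}) before invoking super-uniformity/PRDS and \citet{BY2001}, whereas you invoke the result of \citet{marandon22a} directly.
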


The proof relies on reformulating the proposed procedure as the one of \citet{marandon22a} applied to a suitable ordering 
of the scores,
which is shown 
to satisfy the sufficient exchangeability condition in \citet{marandon22a}  for FDR control, conditionally on a certain random variable $W$. 


The following lemma states two key properties of the calibration scores $(S_{i,j})_{(i,j) \in \Dcal}$ and test scores $(S_{i,j})_{(i,j) \in \Dtest}$. 
\begin{lemma}  \label{lp:lem:ScoreProp}
Let $W$ denote the r.v. given by 
${(k_0, A^*, \cD^1, \DtrO, {X})}$, and let Assumption~\ref{lp:calsize} hold true.
Then: 
\begin{itemize}
\item [(i)] $(S_{i,j})_{1 \leq i,j \leq n}$ is measurable with respect to $W$. 
\item [(ii)] Denote, for any nonempty finite set $C$ and $N \in \mathbb{N}^*$ with $N{\leq} |C|$, $\Xi(C, N)$ the distribution that corresponds to sampling uniformly and without replacement an $N$-sized subset of the set $C$. Then it holds:
\begin{align} \label{lp:eq:DcalCondDistrib1}
\Dcal \vert W &\sim\Xi (\cE^0 \setminus \DtrO, \ell),\\
  \cH_0 &= (\cE^0 \setminus \DtrO) \setminus  \Dcal, \label{lp:eq:DcalCondDistrib2}\\
  \Dtest &  =  (\llbracket 1,n\rrbracket^2 \setminus (\cD^1 \cup \DtrO))  \setminus \Dcal,
\end{align}
{where $\cE^0 = \{(i,j) {\in \llbracket1,n\rrbracket^2} \colon A^*_{i,j} = 0 \}$.}
\end{itemize}
\end{lemma}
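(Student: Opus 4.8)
The plan is to unpack the definition of the scoring function and the calibration mechanism, and to track carefully what information is and is not contained in $W = (k_0, A^*, \cD^1, \DtrO, X)$.

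For part (i), I would argue as follows. The scores are $S_{i,j} = g(\Ztr)_{i,j}$ with $\Ztr = (A, X, \Otr)$, and $g$ has the form \eqref{lp:eq:scorefn2}. The key point is that the masked sampling matrix $\Otr$ satisfies $(\Otr)_{i,j} = 1$ exactly when $(i,j) \in \cD \setminus \Dcal = (\cD^0 \setminus \Dcal) \cup \cD^1 = \DtrO \cup \cD^1$. Thus the support $\{(u,v) : (\Otr)_{u,v} = 1\}$ is precisely $\DtrO \cup \cD^1$, which is a deterministic function of $W$. On that support, the relevant entries of $A$ are $A_{u,v} = A^*_{u,v}$ (since $\Omega_{u,v} = 1$ there), and these are recoverable from $A^*$; moreover on $\cD^1$ they equal $1$ and on $\DtrO$ they equal $0$. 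Feeding this into \eqref{lp:eq:scorefn2}, both the first argument of $h$ (the features $w(a)_{i,j}$, which depend only on powers of the masked adjacency matrix, hence on the entries of $A^*$ restricted to $\DtrO \cup \cD^1$, together with $X$) and the second argument (the training data indexed by $\{(u,v) : (\Otr)_{u,v} = 1\}$) are measurable functions of $W$. Hence so is the whole matrix $(S_{i,j})_{1 \le i,j \le n}$. (Strictly speaking one should note $w(a)_{i,j}$ uses all of row $i$ and row $j$ of the masked matrix, but every such entry is either a known $0$, a known $1$, or recoverable from $A^*$ via the support of $\Otr$, so this is fine.)

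For part (ii), the identities \eqref{lp:eq:DcalCondDistrib2} and the last display are set-theoretic: $\cH_0 = \{(i,j) : \Omega_{i,j} = 0, A^*_{i,j} = 0\} = \cE^0 \cap \Dtest = \cE^0 \setminus \cD$, and since $\cD = \cD^0 \cup \cD^1 = \DtrO \cup \Dcal \cup \cD^1$ while $\Dcal \subseteq \cD^0 \subseteq \cE^0$ and $\cD^1 \cap \cE^0 = \emptyset$, we get $\cH_0 = \cE^0 \setminus (\DtrO \cup \Dcal) = (\cE^0 \setminus \DtrO) \setminus \Dcal$; similarly $\Dtest = \llbracket 1,n\rrbracket^2 \setminus \cD = \llbracket 1,n\rrbracket^2 \setminus (\cD^1 \cup \DtrO \cup \Dcal) = (\llbracket 1,n\rrbracket^2 \setminus (\cD^1 \cup \DtrO)) \setminus \Dcal$. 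For the conditional law \eqref{lp:eq:DcalCondDistrib1}, I would observe that $\Dcal$ is, by construction (Step 1 of Algorithm~\ref{lp:algo:main}), drawn uniformly without replacement of size $\ell$ from $\cD^0$, using randomness independent of everything else; given $W$, the set $\cD^0 = \DtrO \sqcup \Dcal$ is \emph{not} itself fixed, but $\DtrO$ is fixed and $\cD^0 \subseteq \cE^0$ with $\cE^0$ fixed (it is a function of $A^*$). Conditioning on $W = (k_0, A^*, \cD^1, \DtrO, X)$ amounts to conditioning on $\DtrO$ and on $|\cD^0| = k_0$; since $\cD^0 \setminus \Dcal = \DtrO$ and $|\Dcal| = \ell$, the set $\Dcal$ is an $\ell$-subset of $\cE^0 \setminus \DtrO$ with $\cD^0 = \DtrO \cup \Dcal$, and conditionally on $W$ it is uniform on all such subsets — this is where Assumption~\ref{lp:calsize} is used, to ensure $\ell$ is determined by $W$ (through $k_0$ and $\cD^1$) and not an extra source of randomness. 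One has to check the sampling of $\Omega$ is such that, conditionally on $(A^*, X)$, the random set $\cD^0$ together with the independent uniform subsampling makes $\Dcal \mid W$ uniform on $\ell$-subsets of $\cE^0 \setminus \DtrO$; this follows because the law of $\cD^0$ given $(A^*,X)$ is exchangeable over subsets of $\cE^0$ of each given size, and then uniform subsampling of $\Dcal$ preserves this.

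The main obstacle I anticipate is part (ii)'s conditional-distribution claim: one must be careful that conditioning on $W$ — which includes $\DtrO$ but deliberately \emph{excludes} $\Dcal$ — indeed leaves $\Dcal$ uniformly distributed over $\ell$-subsets of $\cE^0 \setminus \DtrO$, rather than carrying residual information through the joint law of $(\Omega, \Dcal)$. The clean way to see this is to generate the randomness in the order: draw $(A^*, X)$; draw $\Omega$ (hence $\cD^0$) given $(A^*,X)$; then draw $\Dcal$ uniformly inside $\cD^0$. Since $\DtrO = \cD^0 \setminus \Dcal$, the pair $(\DtrO, \Dcal)$ is an ordered split of $\cD^0$ into sizes $k_0 - \ell$ and $\ell$, and by symmetry of the construction, conditioning on the first block $\DtrO$ (and on $k_0$, $A^*$, $\cD^1$, $X$) leaves the second block $\Dcal$ uniform over $\ell$-subsets of the complement $\cE^0 \setminus \DtrO$ — one just needs that $\ell$ does not depend on which particular edges landed in $\Dcal$ versus $\DtrO$, which is exactly Assumption~\ref{lp:calsize}. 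The rest is bookkeeping with the set identities above.
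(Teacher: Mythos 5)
Your proposal is correct and follows essentially the same route as the paper: part (i) by observing that $A$ is determined by $\cD^1$ and that the support of $\Otr$ is exactly $\cD^1\cup\DtrO$, hence a function of $W$; part (ii) by re-factorizing the joint law of $(\cD^0,\Dcal)$ — first $\cD^0$ uniform of size $k_0$ in $\cE^0$, then $\Dcal$ uniform of size $\ell$ in $\cD^0$ — into the equivalent ordered split $(\DtrO,\Dcal)$, with Assumption~\ref{lp:calsize} used in the same place to make $\ell$ a function of the conditioning variables. Your write-up merely spells out the symmetry/exchangeability step that the paper labels ``easy to check.''
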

\begin{proof} Point (i) 
{comes} from {$(S_{i,j})_{1 \leq i,j \leq n}=g(A, X, \Otr)$ as, first, $g$ is measurable, second,} {the knowledge of $\cD^1$ is equivalent to the knowledge of $A$, and {third}, $\Otr$ can be rewritten as \[(\Otr)_{i,j}= \begin{cases}
      1 & \text{if}\ (i,j) \in \cD^1 \cup  \DtrO,  \\
     0& \text{otherwise,}
    \end{cases}\]which in turn} entails that $(S_{i,j})_{1 \leq i,j \leq n}$ is measurable with respect to $(\cD^1, \DtrO, {X})$. \\
  
  For (ii), 
  we reason conditionally to $(A^*,k_0,\cD^1)$ and assume those to be fixed. 
  Because $\Omega$ are i.i.d. on $\cE^0$ conditionally to $(A^*, \cpink{X})$, recalling $\cD^0=\{(i,j) \in \cE^0\,:\Omega_{i,j}=1\}$ {and recalling the definition of $\Dcal$ as a sub-sampling without replacement of $\ell$ elements from $\cD^0$}, it follows that 
  \begin{align*}
    \cD^0 | (A^*,k_0,\cD^1, \cpink{X}) &\sim \Xi(\cE^0,k_0) ; \\
    \Dcal | (A^*,k_0,\cD^1,\cD^0, \cpink{X}) & \sim \Xi(\cD^0,\ell);  \\
    \DtrO & = \cD^0 \setminus \Dcal; \\
    \cH_0 & = \cE^0 \setminus \cD^0; \\
    {\Dtest} & = \llbracket1,n\rrbracket^2 \setminus (\cD^0 \cup \cD^1). 
  \end{align*} 
    It is easy to check that the above joint distribution of $(\cD^0,\Dcal,\DtrO,{\Dtest},\cH_0)$ conditional to $(A^*,k_0,\cD^1)$
    is equivalently represented as
    \begin{align*}
      \DtrO| (A^*,k_0,\cD^1, \cpink{X}) &\sim \Xi(\cE^0,k_0-\ell) ;\\
      \Dcal | (A^*,k_0,\cD^1,\DtrO, \cpink{X}) & \sim \Xi(\cE^0\setminus \DtrO, \ell) ; \\
      \cD^0 &= \Dcal \cup \DtrO ; \\
      \cH_0 & = (\cE^0 \setminus \DtrO) \setminus \Dcal; \\
      { \Dtest} & = (\llbracket1,n\rrbracket^2 \setminus ( \cD^1 \cup \DtrO)) \setminus \Dcal.
    \end{align*}  
    This implies the claim.

\end{proof}
The next lemma introduces 
a suitable (random) ordering of the scores in order to formally grant the sufficient score exchangeability condition of \citet{marandon22a}. 

  \begin{lemma} \label{lp:lem:exchbis}
  Use the same notation and assumptions as in Lemma~\ref{lp:lem:ScoreProp}, denote $m_0 = |\cH_0|$.\\
 For given $W$, let $\sigma_0$ be a purely random ordering of $\cE^0 \setminus \DtrO$ (i.e. a random element
  drawn uniformly from $\mathrm{Bij}(\{1,\ldots,\ell + m_0\},\cE^0 \setminus \DtrO)$) and $\sigma_1$ an arbitrary ordering of $\cH_1$
  (an element of $\mathrm{Bij}(\{1,\ldots,|\cH_1|\},\cH_1)$), e.g. lexical ordering.
  Let:
  \begin{align*} 
    \Dcall^{\sigma_0} &= \{ \sigma_0(i), i=1,\ldots,\ell\};  \\
    \cH_0^{\sigma_0} &= \{ \sigma_0(i), i=\ell+1 ,\ldots, \ell+m_0\};  \\
    \Dtestt^{\sigma_0}& 
    = 
    { \cH_0^{\sigma_0} } \cup \{ \sigma_1(i), i=1,\ldots,|\cH_1|\};\\
    S'_i &= S_{\sigma_0(i)},  i=1,\ldots,\ell+m_0; \\
   S'_{\ell+m_0 + j} &=  S_{\sigma_1(j)},  j=1,\ldots,|\cH_1|. 
  \end{align*}
Then the distribution of~$(\Dcall^{\sigma_0},{\Dtestt^{\sigma_0}},\cH_0^{\sigma_0})$ conditionally to $W$ is identical to that of $(\Dcal,{\Dtest},\cH_0)$ given by~\eqref{lp:eq:DcalCondDistrib1}-\eqref{lp:eq:DcalCondDistrib2}, and the scores $(S'_i)_{1 \leq i \leq \ell+m_0}$ form an exchangeable family of
 random variables (conditionally to $W$, hence also unconditionally).
  \end{lemma}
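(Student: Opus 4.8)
The plan is to verify the two assertions of Lemma~\ref{lp:lem:exchbis} separately: first the distributional identity of the selection triple, then the conditional exchangeability of the reordered scores.

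\textbf{Step 1: the distributional identity.} I would start from the construction of $\sigma_0$ as a uniformly random bijection from $\{1,\ldots,\ell+m_0\}$ onto $\cE^0\setminus\DtrO$, conditionally on $W$. The first $\ell$ images of $\sigma_0$ form, by elementary combinatorics, a uniformly random $\ell$-subset of $\cE^0\setminus\DtrO$; that is exactly the law $\Xi(\cE^0\setminus\DtrO,\ell)$ prescribed for $\Dcal\mid W$ in \eqref{lp:eq:DcalCondDistrib1}. Here I use Assumption~\ref{lp:calsize}, which guarantees that $\ell$ is $W$-measurable, so the conditioning is well posed. The remaining images $\sigma_0(\ell+1),\ldots,\sigma_0(\ell+m_0)$ then constitute exactly the complement $(\cE^0\setminus\DtrO)\setminus\Dcall^{\sigma_0}$, matching the deterministic identity \eqref{lp:eq:DcalCondDistrib2} for $\cH_0$. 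Finally, since $\cD^1$ and $\DtrO$ are both $W$-measurable, the set $\cH_1 = \llbracket 1,n\rrbracket^2\setminus(\cD^1\cup\DtrO\cup\cE^0\setminus\DtrO)$ part is fixed given $W$, and $\Dtestt^{\sigma_0} = \cH_0^{\sigma_0}\cup\cH_1$ reproduces $(\llbracket 1,n\rrbracket^2\setminus(\cD^1\cup\DtrO))\setminus\Dcal$. So the joint law of $(\Dcall^{\sigma_0},\Dtestt^{\sigma_0},\cH_0^{\sigma_0})\mid W$ coincides term by term with that of $(\Dcal,\Dtest,\cH_0)\mid W$ from Lemma~\ref{lp:lem:ScoreProp}(ii).

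\textbf{Step 2: exchangeability of $(S'_i)_{1\le i\le \ell+m_0}$ given $W$.} By Lemma~\ref{lp:lem:ScoreProp}(i), the whole score matrix $(S_{i,j})_{1\le i,j\le n}$ is $W$-measurable, hence a \emph{constant} once we condition on $W$. The only randomness left in $S'_i = S_{\sigma_0(i)}$ for $i\le\ell+m_0$ therefore comes from $\sigma_0$. Since $\sigma_0$ is drawn uniformly among all bijections from $\{1,\ldots,\ell+m_0\}$ onto $\cE^0\setminus\DtrO$, for any permutation $\tau$ of $\{1,\ldots,\ell+m_0\}$ the composition $\sigma_0\circ\tau$ has the same (uniform) law as $\sigma_0$; consequently $(S_{\sigma_0(\tau(1))},\ldots,S_{\sigma_0(\tau(\ell+m_0))})$ has the same conditional law as $(S_{\sigma_0(1)},\ldots,S_{\sigma_0(\ell+m_0)})$, which is precisely conditional exchangeability of $(S'_i)_{i\le\ell+m_0}$ given $W$. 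Because this holds for (almost) every value of $W$, it also holds unconditionally by averaging over $W$. I would note that the scores $S'_{\ell+m_0+j}$ associated with $\cH_1$ are explicitly excluded from the exchangeability claim, so no argument is needed for them.

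\textbf{Expected main obstacle.} The combinatorial bookkeeping is routine; the one point that requires care is making sure the conditioning on $W$ is legitimate and that everything ``$W$-measurable'' really is — in particular that $\ell$ (via Assumption~\ref{lp:calsize}), the index set $\cE^0\setminus\DtrO$, and the score matrix are all functions of $W$, so that drawing $\sigma_0$ ``given $W$'' produces a genuine regular conditional distribution and the reduction ``$S$ is constant given $W$'' is valid. Once that measurability ledger is in place, the proof is essentially the observation that a uniform random bijection is invariant under post-composition with permutations of its domain.
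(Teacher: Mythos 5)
Your proof is correct and follows essentially the same route as the paper, whose entire argument is the one‑sentence observation that a $\Xi(C,N)$‑distributed set can be realized as the first $N$ elements of a uniformly random ordering of $C$ — exactly the representation you use in Step 1, with Step 2 being the standard consequence that post‑composing a uniform bijection with any permutation of its domain leaves its law (and hence the law of the $W$‑measurable scores reindexed by it) unchanged. You simply spell out the measurability ledger and the complement bookkeeping that the paper declares "straightforward."
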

  \begin{proof}
  Straightforward since a (set-valued) random variable $F \sim \Xi(C,N)$ can be represented by a set whose elements are
  $N$ successive draws from $C$ without replacement, in turn equivalent to (i) drawing uniformly an ordering of $C$; (ii) taking
  the $N$ first elements according to that order.
  \end{proof}

  \begin{proof}[Proof of Theorem~\ref{lp:thm1}]
    {Let us consider the same construction and notation as in Lemma~\ref{lp:lem:exchbis} and define additionally
      \[\wt{p}_{i,j} = \frac{1}{\ell +1} \left(1 + \sum_{(u,v) \in \Dcall^{\sigma_0}} \ind_{ \{S_{i,j} \leq S_{u,v} \} }\right), 
        \quad (i,j) \in \Dtestt^{\sigma_0}; \]
      observe that it is merely definition~\eqref{lp:eq:confpvalues} with $\Dcal,\Dtest$ replaced by $\Dcall^{\sigma_0},\Dtestt^{\sigma_0}$.}

{      We recall the definition of the FDP of a rejection set $R$:
      \[
        \FDP(R,\cH_0) = \frac{\vert R \cap \cH_0\vert}{\vert R \vert \vee 1},
      \]
      where for precision of notation, we have specified explicitly the set of null hypotheses $\cH_0$ on the left-hand side, because it is random
      in our observation model.
    }

    {For a rejection procedure $R((p_{i,j}))$ only depending on the family of $p$-values, observe that we have
      \begin{equation} \label{eq:fdpeq}
        \FDP(R((p_{i,j})_{(i,j) \in \Dtest}),\cH_0) \overset{\mathcal L}{=}  \FDP(R((\wt{p}_{i,j})_{(i,j) \in \Dtestt^{\sigma_0}}),\cH_0^{\sigma_0}),
      \end{equation}
      where $\overset{\mathcal L}=$ represents equality in distribution. Namely, by inspection of the definition
      of the family $p$-values~\eqref{lp:eq:confpvalues}, notice that conditionally to $W$
      those only depend on 
     $(\Dcal,\Dtest)$ (since the scores are measurable with respect to $W$, see Lemma~\ref{lp:lem:ScoreProp}). As a consequence, the left-hand side of~\eqref{eq:fdpeq} only depends on $(\Dcal,\Dtest,\cH_0)$,
      and the right-hand-side is obtained by replacing those 
      by $(\Dcall^{\sigma_0},\Dtestt^{\sigma_0},\cH_0^{\sigma_0})$, which according to Lemma~\ref{lp:lem:exchbis}
      have the same joint distribution conditional to $W$. Hence~\eqref{eq:fdpeq} holds conditionally to $W$ (hence
      also unconditionally). We therefore focus on the right-hand side of~\eqref{eq:fdpeq} from now on.
    }
    
For any set of $p$-values $(p_i)_{i \in \cI}$, denote by $R_{BH}((p_i)_{i \in \cI}) \subseteq \cI$ the rejection set of the BH procedure; {it takes the form
\[
  R_{BH}((p_i)_{i \in \cI}) = \Big\{ i \in I: p_i \leq t_{BH}( \{p_i,i \in I\}) \Big\},
\]
where $t_{BH}$ is a threshold that depends only on the (multi)set of
$p$-values $\{p_i,i \in I\}$.
}
Let us introduce the set of r.v. $p'_1, \dots, p'_m$ such that 
\begin{equation} \label{eq:pvaluesp}
p'_a = \frac{1}{\ell+1} \left(1 + \sum_{b=1}^\ell \ind_{S'_{{\ell+}a} \leq S'_{b}} \right), \quad 1 \leq a \leq m, 
\end{equation}
with the r.v. $(S'_a)_{1 \leq a \leq m}$ defined in Lemma~\ref{lp:lem:exchbis}.

{It can be readily checked that
  \begin{equation}\label{eq:switchp}
    p'_a = \wt{p}_{\sigma_0(\ell+a)}, \quad 1 \leq a \leq m_0; \quad p'_{m_0+a} = \wt{p}_{\sigma_1(a)}, \quad 1 \leq a \leq \vert \cH_1 \vert.  
  \end{equation}
  As a consequence, we have that 
\begin{align}
\FDP \left(
R_{BH} ( (\wt{p}_{i,j})_{(i,j) \in \Dtestt^{\sigma_0}} ),\cH_0^{\sigma_0}  \right)
  &= \frac{ \vert \cH_0^{\sigma_0} \cap R_{BH}( (\wt{p}_{i,j})_{(i,j) \in \Dtestt^{\sigma_0}}) \vert}{\vert R_{BH}((\wt{p}_{i,j})_{(i,j) \in \Dtestt^{\sigma_0}}) \vert \vee 1}\notag \\
 & = \frac{ 
 \sum_{(i,j) \in \cH_0^{\sigma_0} } \ind \{ 
 \wt{p}_{i,j} \leq t_{BH} (\{ p_{i,j}, (i,j) \in \Dtestt^{\sigma_0}\})  \} 
 }{  \vert R_{BH}((\wt{p}_{i,j})_{(i,j) \in \Dtestt^{\sigma_0}}) \vert \vee 1} \nonumber \\
 &= \frac{ \sum_{a=1}^{m_0} \ind \{ 
   p'_a \leq t_{BH}( \{ p'_{b}, 1 \leq b \leq m\}) \}}{ \vert R_{BH} ( (p'_{b})_{1 \leq b \leq m} )  \vert \vee 1 }\label{lp:trucsigma} \\
  & = \FDP \left(
    R_{BH} ( (p'_a)_{1 \leq a \leq m} ) , \{1,\ldots,m_0\}
    \right),  \label{lp:eq:fdp}
\end{align}
where~\eqref{lp:trucsigma} is due to~\eqref{eq:switchp} and the definitions in Lemma~\ref{lp:lem:exchbis}.
}

Under Assumption \ref{lp:ass:noties}, 
Theorem 3.3. of \citet{marandon22a} entails that conditionally on $W$, the r.v. $(p'_a)_{1 \leq a \leq m_0}$  are each marginally super-uniform and the r.v. $(p'_a)_{1 \leq a \leq m }$ are PRDS on $\{1, \dots, m_0 \}$. It follows from \citet{BY2001} that the conditional expectation with respect to $W$ of the right term in \eqref{lp:eq:fdp} is below $\alpha$ (thus this holds also for the unconditional expectation). This concludes the proof.

\end{proof}

Furthermore, the same arguments as above (representing the distributions of null scores as a conditionally exchangeable distribution) also allow us to leverage
  recent results of \citet{GBR2023} to get a control of the false discovery proportion (FDP) with high probability, and which hold uniformly over the choice of the
  rejection threshold.
  \begin{theorem} \label{th:appgazin}
Consider the same setting and assumptions as in Theorem~\ref{lp:thm1}, but instead of applying the BH procedure as the last step of Algorithm~\ref{lp:algo:main},
    denote $R(t)$ as the rejection set obtained as the set of edges having conformal $p$-value less than $t$, for $t\in[0,1]$.\\    
    For any $\delta \in(0,1)$, with probability at least $1-\delta$ with respect to the sampling matrix $\Omega$ and the draw of $\Dcal$, it holds
    \begin{equation} \label{eq:probjoint}
      P\left(\forall t \in [0,1]: \FDP(R(t)) \leq \frac{m}{1\vee |R(t)|} (1+ \lambda(m,\ell,\delta))\right) \geq 1-\delta,
    \end{equation}
    where $\lambda(m,\ell,\delta)$ is the $1-\delta$ quantile of a certain universal probability
    distribution $P_{m,\ell}$, and satisfies
    \begin{equation} \label{eq:estlambda}
      \lambda(m,\ell,\delta) \leq \left( \frac{\log \delta^{-1} + \log(1 + 2\sqrt{\pi} (m \wedge \ell) ) }{m \wedge \ell} \right)^{\frac{1}{2}}.
    \end{equation}
  \end{theorem}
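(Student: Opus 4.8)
The plan is to mirror exactly the reformulation carried out in the proof of Theorem~\ref{lp:thm1}, but to replace the appeal to the BH-specific results of \citet{marandon22a,BY2001} by the uniform FDP bound of \citet{GBR2023}. First, I would reuse Lemmas~\ref{lp:lem:ScoreProp} and~\ref{lp:lem:exchbis} verbatim: conditionally on $W$, the scores are $W$-measurable, and the family $(S'_i)_{1 \leq i \leq \ell + m_0}$ of calibration-plus-null-test scores is exchangeable. Since the map $t \mapsto R(t)$ and hence the whole FDP path $t \mapsto \FDP(R(t),\cH_0)$ depends on the data only through $(\Dcal, \Dtest)$ and the scores, the same distributional-equality argument as in~\eqref{eq:fdpeq} gives, conditionally on $W$,
\[
  \bigl(\FDP(R(t),\cH_0)\bigr)_{t \in [0,1]} \overset{\mathcal L}{=} \bigl(\FDP(R^{\sigma_0}(t),\cH_0^{\sigma_0})\bigr)_{t \in [0,1]},
\]
where $R^{\sigma_0}(t) = \{(i,j) \in \Dtestt^{\sigma_0} : \wt p_{i,j} \leq t\}$; and via~\eqref{eq:switchp} this path equals the path $t \mapsto \FDP(\{a : p'_a \leq t\}, \{1,\ldots,m_0\})$ built from the $p$-values $(p'_a)_{1 \leq a \leq m}$ of~\eqref{eq:pvaluesp}. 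So it suffices to bound the FDP path of the fixed-threshold conformal-$p$-value procedure applied to an exchangeable null calibration family, which is precisely the situation covered by \citet{GBR2023}.

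Next I would invoke the relevant theorem of \citet{GBR2023}. Their result states that, for conformal $p$-values built from $\ell$ calibration scores exchangeable with $m_0$ true-null test scores among $m$ test points, there is a universal distribution $P_{m,\ell}$ and, for its $1-\delta$ quantile $\lambda = \lambda(m,\ell,\delta)$, one has
\[
  \P\Bigl(\forall t \in [0,1]:\ \FDP\bigl(\{a : p'_a \leq t\},\{1,\ldots,m_0\}\bigr) \leq \frac{m}{1 \vee |\{a : p'_a \leq t\}|}\,(1+\lambda)\Bigr) \geq 1-\delta,
\]
where the randomness is over the exchangeable scores (equivalently, over $\sigma_0$) given $W$. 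The factor $m/(1 \vee |R(t)|)$ is the natural conservative replacement for the unknown proportion $m_0/m$ of nulls, exactly as in~\eqref{eq:probjoint}. The tail bound~\eqref{eq:estlambda} on $\lambda$ is the quantile estimate supplied by \citet{GBR2023} (essentially a DKW-type deviation for the empirical cdf of the null $p$-values, which are discretely uniform on a grid of size $\ell+1$, combined with the $m_0 \leq m$ bookkeeping), so it is quoted rather than rederived. Since the bound above holds conditionally on $W$ with probability $\geq 1-\delta$ with respect to $\sigma_0$, and since by Lemma~\ref{lp:lem:exchbis} the conditional law of the original FDP path given $W$ coincides with that of the $\sigma_0$-randomized one, the event in~\eqref{eq:probjoint} has conditional probability $\geq 1-\delta$ given $W$ — and here I would note that conditioning on $W = (k_0, A^*, \cD^1, \DtrO, X)$ fixes everything except the draw of $\Dcal$ from $\cE^0 \setminus \DtrO$ and the remaining randomness of $\Omega$ on the nulls, which is exactly the ``sampling matrix $\Omega$ and draw of $\Dcal$'' randomness quoted in the statement; integrating out $W$ (or rather, re-reading the conditional statement) yields~\eqref{eq:probjoint} as stated.

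The main obstacle, as I see it, is purely one of bookkeeping at the interface between the two probability layers: \citet{GBR2023} prove their bound for a genuinely exchangeable family under a single probability measure, whereas here exchangeability only holds \emph{conditionally} on $W$, and moreover the set of nulls $\cH_0$ (hence $m_0$) is itself random. One must be careful that $m$ and $\ell$ — the quantities entering $\lambda(m,\ell,\delta)$ — are $W$-measurable ($\ell$ by Assumption~\ref{lp:calsize}, $m = |\Dtest| = n^2 - |\cD^1| - |\DtrO|$ trivially), so that $\lambda(m,\ell,\delta)$ is a legitimate constant once we condition on $W$; the quantile distribution $P_{m,\ell}$ of \citet{GBR2023} must be checked to depend only on $(m,\ell)$ and not on $m_0$, which is the key structural point that makes the $m/(1\vee|R(t)|)$ inflation valid regardless of the realized number of nulls. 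Once that is verified, the two-stage (conditional then marginal) probability statement assembles routinely, and there is no further analytic content to supply.
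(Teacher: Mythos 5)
Your proposal is correct and follows essentially the same route as the paper: reuse the conditional-on-$W$ exchangeability reformulation from Lemmas~\ref{lp:lem:ScoreProp} and~\ref{lp:lem:exchbis} and the distributional identity of the FDP path under reindexing, then apply the uniform FDP bound of \citet{GBR2023} (the paper cites its Corollary~4.1 and Theorem~2.4) conditionally on $W$ and integrate out. Your additional bookkeeping remarks (that $m$ and $\ell$ are $W$-measurable and that $P_{m,\ell}$ does not depend on $m_0$) are consistent with, and slightly more explicit than, what the paper writes.
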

  \begin{proof}{ Following the same argument as in the proof of Theorem~\ref{lp:thm1}, we use the fact that
    \[
      ( (p_{i,j})_{j \in \Dtest} , \cH_0) \overset{\mathcal L}{=}
      ( (\wt{p}_{i,j})_{j \in \Dtestt^{\sigma_0}} , \cH_0^{\sigma_0}).
    \]
    Hence, since the event considered in the claim~\eqref{eq:probjoint} only depends on $( (p_{i,j})_{j \in \Dtest} , \cH_0)$,
    its probability is unchanged when those are replaced by $( (\wt{p}_{i,j})_{j \in \Dtestt^{\sigma_0}} , \cH_0^{\sigma_0})$.}
    
{Consider now the $p$-values $(p'_i)_{1 \leq i \leq m}$ given by
    explicit reindexing of $(\wt{p}_{ij})_{j \in \Dtestt^{\sigma_0}}$ via~\eqref{eq:pvaluesp}-\eqref{eq:switchp};
    again, since the event in~\eqref{eq:probjoint} only depends on the $p$-values and on $\cH_0$ as (multi)sets
    and not of the ordering of their indexation, we can use the reindexing given
    by $(p'_i)_{1 \leq i \leq m}$ (and the corresponding $\cH_0'=\{1,\ldots,m_0\}$) without changing the probability.
    Since the reindexed $p$-values $(p'_i)_{1 \leq i \leq m}$ can now be seen as conformal $p$-values based
    on exchangeable scores without ties $(S'_i)_{1 \leq i \leq \ell+m}$,}
%
%
    the claimed bound then
    holds, conditionally to $W$, as a direct application of Corollary~4.1 and Theorem~2.4
    of \citet{GBR2023} (with some straightforward estimates to simplify slightly the
    final expression). Thus the claimed probability bound also holds unconditionally.
      \end{proof}

  \begin{remark} Observe that the uniformity with respect to $t$ allows for a { post hoc}
    (i.e. data-dependent) choice of the rejection threshold, rather than having to set
    a target level $\alpha$ for the FDR in advance, {see \citet{MR4124323}}. This is of particular interest since
    the output of a link prediction procedure is often presented to the user as a list of edges ordered
    by their scores rather than a fixed rejection set; the user can then monitor by themselves
    the guaranteed FDP bound in dependence of the threshold and decide of an adequate trade-off
    based on possibly additional criteria (e.g. concerning the geometry of rejected edges,
    or using additional information).
    
    From the results of~\citet{GBR2023},
    the ``universal probability'' $P_{m,\ell}$ {appearing in Theorem~\ref{th:appgazin}} is that of the
    supremum deviation from identity of the empirical cdf for the color
    histogram of a P\'olya urn scheme with $\ell$ draws from an urn
    initially containing $(m+1)$ balls of different colors. It can be easily approximated
    by Monte-Carlo simulation to get a sharper estimate. The explicit estimate~\eqref{eq:estlambda}
    has the advantage of clearly showing an exponential concentration of order
    $\mathcal{O}( \sqrt{\log( ( m\wedge \ell)/\delta)/(m\wedge \ell)})$.
    The bound of \citet{GBR2023} also allows the replacement of $m$ by a specific built-in estimate
    $\wh{m}_0$ of $m_0$, and a slightly sharper estimate where the logarithmic
    dependence in $(n\wedge m)$ can be improved to an iterated logarithm. Both of
    these refinements are ignored here for simplicity.
  \end{remark}
  
  \section{Discussion}\label{discussion}
  
{
  \textbf{Comparison to \cite{huang2023uncertainty} :} Our arguments on score exchangeability 
  bears similarity to the 
  reasoning of \cite{huang2023uncertainty}, who considered conformal prediction intervals for label values on the nodes of a graph. A first (superficial) difference is that we predict on edges rather than nodes of the graph,
  and that we consider FDR and FDP for edge presence/absence rather than false coverage rate  prediction intervals.
  We do not need an analogue of Assumption 1 of \cite{huang2023uncertainty}, which assumes explicitly that the score function is invariant
  with respect to permutation of indices of the calibration+test set. Namely,
  this is in fact automatically satisfied in our setting, since the training function $g$ applied to
  the masked observation data $\Ztr$ in Algorithm~\ref{lp:algo:main} does not have the information of
  which edges are in $\Dcal$ and which in $\Dtest$.}

{More importantly, the nature of our results are different because, instead of fixing a threshold $t$ and get guarantees for the FDR or FDP of $R(t)$ as in \cite{huang2023uncertainty}, we consider either a (specific) data-dependant threshold $t_{BH}$ 
   which ensures a control of the FDR, or a  bound uniform with respect to $t$ on the FDP which allows to use any data-dependent threshold and still ensures valid bounds on the FDP.  For a fixed beforehand threshold $t$, it is possible to follow the same
   line of argumentation we used to explicitly represent the scores as an exchangeable uple, and combine with the results
   of \cite{marquesf2023universal} instead of \cite{marandon2023conformal} or \cite{GBR2023}. In this case the same
   control as appearing in \cite{huang2023uncertainty} is recovered. See also \cite{GBR2023} for the relation
   of the distribution $P_{m,\ell}$ to the results of \cite{marquesf2023universal,huang2023uncertainty}.
}
\bibliographystyle{apalike}
\bibliography{bibli_lp}

\section*{Acknowledgments}

A. Marandon acknowledges funding from the Turing-Roche Strategic Partnership.
GB, GD and RP acknowledge funding from the grants ANR-21-CE23-0035 (ASCAI), ANR-19-CHIA-0021-01 (BISCOTTE)
and  ANR-23-CE40-0018-01 (BACKUP) of the French National Research Agency ANR.
\end{document}